\documentclass[10pt,draft,reqno]{amsart}
     \makeatletter
     \def\section{\@startsection{section}{1}%
     \z@{.7\linespacing\@plus\linespacing}{.5\linespacing}%
     {\bfseries
     \centering
     }}
     \def\@secnumfont{\bfseries}
     \makeatother
\setlength{\textheight}{19.5 cm}
\setlength{\textwidth}{12.5 cm}
\usepackage{amsmath, amssymb, bbm}

\hoffset=-0.5in
\textwidth=6in
\usepackage{enumitem}
\newtheorem{theorem}{Theorem}[section]
\newtheorem{lemma}[theorem]{Lemma}
\newtheorem{prop}[theorem]{Proposition}
\newtheorem{cor}[theorem]{Corollary}

\theoremstyle{definition}

\theoremstyle{remark}

\numberwithin{equation}{section}

\newcommand{\rr}{{\mathbb R}}
\newcommand{\rd}{{\mathbb R^d}}
\newcommand{\rmm}{{\mathbb{R}^m}}

\newcommand{\Gr}{\operatorname{Gr}}

\setcounter{page}{1}
\allowdisplaybreaks

\begin{document}
\sloppy
\title[Fractal behavior of operator-self-similar stable random fields]{Fractal behavior of multivariate operator-self-similar stable random fields} 

\author{Ercan S\"onmez}
\address{Ercan S\"onmez, Mathematisches Institut, Heinrich-Heine-Universit\"at D\"usseldorf, Universit\"atsstr. 1, D-40225 D\"usseldorf, Germany}
\email{ercan.soenmez\@@{}hhu.de}

\begin{abstract}
We investigate the sample path regularity of multivariate operator-self-similar stable random fields with values in $\rmm$ given by a harmonizable representation. Such fields were introduced in \cite{LiXiao} as a generalization of both operator-self-similar stochastic processes and operator scaling random fields and satisfy the scaling property $\{X(c^E t) : t \in \rd \} \stackrel{\rm d}{=} \{c^D X(t) : t \in \rd \}$, where $E$ is a real $d \times d$ matrix and $D$ is a real $m \times m$ matrix. This paper provides the first results concerning sample path properties of such fields, including both $E$ and $D$ different from identity matrices. In particular, this solves an open problem in \cite{LiXiao}.
\end{abstract}

\keywords{Fractional random fields, stable random fields, operator-self-similarity, modulus of continuity, Hausdorff dimension}
\subjclass[2010]{Primary 60G60; Secondary 28A78, 28A80, 60G15, 60G17, 60G18.}
\thanks{* This work has been supported by Deutsche Forschungsgemeinschaft (DFG) under grant KE1741/ 6-1}
\maketitle

\baselineskip=18pt

\section{Introduction}

A multivariate operator-self-similar field $\{X (x) : x \in \rd \}$ is a random field with values in $\rmm$ whose finite-dimensional distributions are invariant under suitable scaling of the time vector $x$ and the corresponding $X(x)$ in the state space. More precisely, let $E \in \mathbb{R}^{d \times d}$ and $D \in \mathbb{R}^{m \times m}$ be real matrices with positive real parts of their eigenvalues. Then the random field $\{X(x) : x \in \rd \}$ is called $(E,D)$-operator-self-similar if
\begin{equation}\label{OSS}
	\{ X( c^Ex) : x \in \mathbb{R}^d \} \stackrel{\rm d}{=} \{ c^D X(x) : x \in \mathbb{R}^d \}  \quad \text{for all } c>0,
\end{equation}
where $\stackrel{\rm d}{=}$ means equality of all finite-dimensional  marginal distributions and $c^A = \exp (A \log c) = \sum_{k=0}^{\infty} \frac{(\log c)^k}{k!} A^k$ is the matrix exponential. 

Random fields satisfying the property \eqref{OSS} were first introduced in \cite{LiXiao} as a generalization of both operator-self-similar processes \cite{LahaRo, HudsonMason, Sato, Lamperti} and operator scaling random fields \cite{BMS, BL}. Recall that a stochastic process $\{Z(t) : t \in \rr \}$ with values in $\rmm$ is called operator-self-similar if
\begin{equation*}
	\{ Z( c t) : t \in \mathbb{R} \} \stackrel{\rm d}{=} \{ c^D Z(t) : t \in \mathbb{R} \}  \quad \text{for all } c>0,
\end{equation*}
whereas a scalar valued random field $\{ Y( t) : t \in \mathbb{R}^d \}$ is said to be operator scaling of order $E$ and some $H>0$ if
\begin{equation*}
	\{ Y( c^E t) : t \in \mathbb{R}^d \} \stackrel{\rm d}{=} \{ c^H Y(t) : t \in \mathbb{R}^d \}  \quad \text{for all } c>0.
\end{equation*}
Note that $(E,D)$-operator-self-similar random fields can be seen as an anisotropic generalization of an operator-self-similar random field $\{ Z( t) : t \in \mathbb{R}^d \}$ satisfying
\begin{equation*}
	\{ Z( c t) : t \in \mathbb{R}^d \} \stackrel{\rm d}{=} \{ c^D Z(t) : t \in \mathbb{R}^d \} 
\end{equation*}
for every $c>0$. Then $\{ Z( t) : t \in \mathbb{R}^d \}$ is $(I_d, D)$-operator-self-similar, where $I_d$ is the $d \times d$ identity matrix.

The theoretical importance of self-similar random fields has increased significantly during the past four decades. They are also useful to model various natural phenomena for instance in physics, geophysics, mathematical engineering, finance or internet traffic, see, e.g., \cite{Levy, Abry, SamorodTaqq, Wackernagel, Chiles, BonEstr, Benson, Davis, WillPaxTaqq}. A very important class of such fields is given by Gaussian random fields and, in particular, by fractional Brownian fields (see \cite{SamorodTaqq, MasonXiao}). However, Gaussian modeling is a serious drawback for applications including heavy-tailed persistent phenomena. For this purpose $\alpha$-stable random fields have been introduced. A vector valued random field $\{X (x) : x \in \rd \}$ is said to be symmetric $\alpha$-stable ($S \alpha S$) for $\alpha \in (0,2]$ if any linear combination $\sum_{k=1}^n a_k X(x_k)$ is multivariate $S \alpha S$. We refer the reader to \cite[Chapter 2]{SamorodTaqq} for a comprehensive introduction to multivariate stable distributions.

In order to establish the existence of multivariate operator-self-similar random fields, Li and Xiao \cite{LiXiao} defined stochastic integral representations of random vectors and followed the outline in \cite{BMS}. Both moving-average as well as harmonizable representations of $(E,D)$-operator-self-similar $S\alpha S$ random fields are given. Lastly, they leave the open problem of investigating the sample path regularity and fractal dimensions of these fields. In particular, they conjecture that these properties such as path continuity and Hausdorff dimensions are mostly determined by the real parts of the eigenvalues of $E$ and $D$. S\"onmez \cite{Soenmez} solved this problem for the moving-average and harmonizable representation in the Gaussian case $\alpha = 2$ and generalized several results in the literature (see \cite{BMS, MasonXiao, LiWangXiao, Xiao3}). In particular, he highlighted that the Hausdorff dimension of the range and the graph over a sample path depends on the real parts of the eigenvalues of $E$ and $D$ as well as the multiplicity of the eigenvalues of $E$ and $D$. The purpose of this paper is to establish the corresponding results in the stable case $\alpha \in (0,2)$ for the harmonizable representation. Indeed, we show that harmonizable $\alpha$-stable operator-self-similar random fields have the same kind of regularity properties as Gaussian operator-self-similar random fields. We first give an upper bound on the modulus of continuity by elegantly showing the applicability of results from \cite{BL2} to multivariate operator-self-similar stable random fields. Based on this we calculate the Hausdorff dimension of the range and the graph. We remark that Xiao \cite{Xiao3} investigated Hausdorff dimensions of multivariate $\alpha$-stable random fields by making the assumption of "locally approximately independently components" (see \cite[Section 3]{Xiao3}). In fact, in view of our methods it will be clear that this assumption is superfluous in order to determine the Hausdorff dimension of the range and the graph of the sample paths of multivariate $\alpha$-stable random fields.

The rest of this paper is organized as follows. Section 2 deals with exponential powers of linear operators. In Section 3 we recall the definition of harmonizable multivariate $(E,D)$-operator-self-similar stable random fields. In Section 4 we give an upper bound on the modulus of continuity of the random vector components in terms of the radial part with respect to the matrix $E$ introduced in \cite[Chapter 6]{MeersScheff}. Finally, in Section 5 we state and prove our main results on the Hausdorff dimension of harmonizable multivariate $(E,D)$-operator-self-similar $\alpha$-stable random fields.

\section{Preliminaries}

Throughout this paper, let $E \in \mathbb{R}^{d \times d}$ be a matrix with distinct real parts of its eigenvalues given by $0<a_1 < \ldots < a_p$ for some $p \leq d$ and let $q = \operatorname{trace}(E)$. Assume that each eigenvalue corresponding to $a_1, \ldots, a_p$  has multiplicity $\mu_1, \ldots, \mu_p$, respectively. Furthermore, let $D \in \mathbb{R}^{m \times m}$ be a matrix with positive real parts of its eigenvalues given by $0 < \lambda_1 \leq \lambda_2 \leq \ldots \leq \lambda_m$. As done in \cite{Soenmez} without loss of generality we assume that

\begin{equation}\label{eigenvalues}
	\lambda_m < 1 < a_1.
\end{equation}
Let us recall that from the Jordan decomposition theorem (see e.g. \cite[p. 129]{Hirsch}) there exists a real invertible matrix $A \in \mathbb{R}^{m \times m}$ such that $A^{-1} D A$ is of the real canonical form, i.e. it consists of diagonal blocks which are either Jordan cell matrices of the form
\begin{equation*}
\left(
\begin{array}{cccccc}
\lambda & 1 &  & &    \\
 & \lambda & 1 &   \\
 &  &  \ddots & \ddots&   \\
 & & & \ddots & 1   \\
 & & & &  \lambda   
\end{array} \right) .
\end{equation*}
with $\lambda$ a real eigenvalue of $D$ or blocks of the form
\begin{equation*}
\left(
\begin{array}{cccccc}
\Lambda & I_2 &  & &    \\
 & \Lambda & I_2 &   \\
 &  &  \ddots & \ddots&   \\
 & & & \ddots & I_2   \\
 & & & &  \Lambda   
\end{array} \right) \quad \text{with} \\\  \Lambda = \left(
\begin{array}{cccccc}
a & -b   \\
b & a  
\end{array} \right) \quad \text{and} \\\ I_2 = \left(
\begin{array}{cccccc}
1 & 0   \\
0 & 1  
\end{array} \right),
\end{equation*}
where the complex numbers $a \pm ib, b \neq 0$, are complex conjugated eigenvalues of $D$. The following proposition is due to \cite[Proposition 2.2.11]{MeersScheff}.

\begin{prop} \label{auxprp}
Let $A \in \mathbb{R}^{m \times m}$ be a matrix with positive real parts of its eigenvalues and let $\| \cdot \|$ be any arbitrary norm on $\rmm$. Then the following statements hold.
\item[(a)] If every eigenvalue of $A$ has real part less than $\beta_1$, then for any $t_0 >0$ there exists a constant $C>0$ such that $\| t^A x \| \geq C t^{\beta_1}\| x \|$ holds for all $0<t\leq t_0$ and all $x \in \rmm$.
\item[(b)] If every eigenvalue of $A$ has real part less than $\beta_2$, then for any $s_0 >0$ there exists a constant $C>0$ such that $\| s^A x \| \leq C s^{\beta_2}\| x \|$ holds for all $s \geq s_0$ and all $x \in \rmm$.
\end{prop}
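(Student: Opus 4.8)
The plan is to reduce both statements to one growth estimate for the matrix exponential and then to pass between (a) and (b) via the substitution $t \leftrightarrow 1/t$. Since all norms on $\rmm$ are equivalent and $\|s^A x\| \le \|s^A\|_{\mathrm{op}}\|x\|$, while conversely $\|x\| = \|s^{-A}s^A x\| \le \|s^{-A}\|_{\mathrm{op}}\|s^A x\|$ (using $s^{-A}s^A = s^{0} = I$), it is enough to control the operator norm $\|s^A\|_{\mathrm{op}}$ induced by the given norm; all constants below may silently absorb the norm-equivalence factors.

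The key estimate I would establish first is the following: writing $\rho := \max\{\mathrm{Re}(\lambda) : \lambda \text{ an eigenvalue of } A\}$, for every $\varepsilon > 0$ there is a constant $C_\varepsilon > 0$ with
\begin{equation*}
\|s^A\|_{\mathrm{op}} \le C_\varepsilon\, s^{\rho + \varepsilon} \qquad \text{for all } s \ge 1.
\end{equation*}
This follows by putting $A = PJP^{-1}$ with $J$ in Jordan canonical form (over $\mathbb{C}$): on each Jordan block $J_k = \lambda_k I + N_k$ with $N_k$ nilpotent one has $\exp(uJ_k) = e^{u\lambda_k}\exp(uN_k)$, whose entries are polynomials in $u$ times $e^{u\lambda_k}$, so $\|\exp(uA)\|_{\mathrm{op}} \le p(u)\,e^{\rho u}$ for some polynomial $p$ (absorbing $\|P\|_{\mathrm{op}}\|P^{-1}\|_{\mathrm{op}}$); since $p(u)e^{-\varepsilon u}$ is bounded on $[0,\infty)$, putting $u = \log s \ge 0$ gives the claim.

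For part (b): the hypothesis that every eigenvalue has real part less than $\beta_2$ gives $\rho < \beta_2$, so I pick $\varepsilon := (\beta_2 - \rho)/2$, so that $\rho + \varepsilon < \beta_2$. Then for $s \ge \max\{s_0,1\}$,
\begin{equation*}
\|s^A\|_{\mathrm{op}} \le C_\varepsilon\, s^{\rho + \varepsilon} = C_\varepsilon\, s^{\beta_2}\, s^{\rho + \varepsilon - \beta_2} \le C_\varepsilon\, \big(\max\{s_0,1\}\big)^{\rho + \varepsilon - \beta_2}\, s^{\beta_2},
\end{equation*}
since the exponent $\rho + \varepsilon - \beta_2$ is negative; if $s_0 < 1$ the bound on the remaining range $s \in [s_0,1]$ is immediate from continuity and compactness, as $s^{\beta_2} \ge s_0^{\beta_2} > 0$ there ($\beta_2 > 0$ because $A$ has eigenvalues with positive real parts). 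Taking the larger of the two constants yields $\|s^A x\| \le C\, s^{\beta_2}\|x\|$ for all $s \ge s_0$ and all $x$, which is (b).

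For part (a): fix $t_0 > 0$ and let $0 < t \le t_0$. Then $s := 1/t \ge 1/t_0 =: s_0 > 0$ and $t^{-A} = (1/t)^A$. Since every eigenvalue of $A$ has real part less than $\beta_1$, part (b) applied with $\beta_2 := \beta_1$ and this $s_0$ gives a constant $C' > 0$ with $\|(1/t)^A\|_{\mathrm{op}} \le C'(1/t)^{\beta_1} = C'\, t^{-\beta_1}$. Hence
\begin{equation*}
\|x\| = \|t^{-A} t^A x\| \le \|t^{-A}\|_{\mathrm{op}}\|t^A x\| \le C'\, t^{-\beta_1}\, \|t^A x\|,
\end{equation*}
i.e.\ $\|t^A x\| \ge (C')^{-1} t^{\beta_1}\|x\|$, which is (a) with $C = (C')^{-1}$. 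The only genuinely delicate point in this scheme is the first displayed estimate — the $\log$-polynomial corrections coming from nondiagonalizable blocks — and it is absorbed once and for all by the arbitrarily small spectral slack $\varepsilon$; everything else is bookkeeping.
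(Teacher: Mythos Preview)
Your argument is correct. The Jordan-form estimate $\|e^{uA}\|_{\mathrm{op}} \le p(u)e^{\rho u}$ is the standard way to obtain spectral growth bounds for matrix exponentials, and the duality between (a) and (b) via $t \mapsto 1/t$ and $t^{-A} = (t^{-1})^A$ is clean. One tiny cosmetic remark: in your treatment of (b) for $s \ge \max\{s_0,1\}$ you could simply note $s^{\rho+\varepsilon} \le s^{\beta_2}$ directly (since $s \ge 1$ and the exponent difference is negative), which makes the constant $C_\varepsilon$ already suffice without the extra factor.

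There is nothing to compare against in the paper itself: the paper does not prove this proposition but merely cites it as \cite[Proposition 2.2.11]{MeersScheff}. Your self-contained argument is essentially the proof one finds in that reference, so you have supplied what the paper outsources.
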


\begin{cor} \label{auxcor}
Assume that $D$ is of the real canonical form and let $\| \cdot \|$ be any arbitrary norm on $\rmm$. Then the following statements hold.
\item[(a)] For any $t_0 >0$ there exists a constant $C_1>0$ such that for any $\varepsilon >0$ 
$$\| t^D \theta \| \geq C_1 \sum_{j=1}^m t^{\lambda_j + \varepsilon} | \theta_j |$$
holds for all $0<t\leq t_0$ and all $\theta \in \rmm$.
\item[(b)] For any $s_0 >0$ there exists a constant $C_2>0$ such that for any $\varepsilon >0$ 
$$\| s^{-D} \theta \| \leq C_2 \sum_{j=1}^m s^{-\lambda_j + \varepsilon} | \theta_j |$$
holds for all $s\geq s_0$ and all $\theta \in \rmm$.
\end{cor}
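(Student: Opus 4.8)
The plan is to reduce both statements to Proposition \ref{auxprp} applied blockwise to the real canonical form of $D$. First I would observe that since $D$ is in real canonical form, $\rmm$ decomposes as a direct sum $\rmm = V_1 \oplus \cdots \oplus V_\ell$ of the coordinate subspaces corresponding to the Jordan blocks, and $t^D$ acts on each $V_i$ independently as $t^{D_i}$, where $D_i$ is the $i$-th block. Writing $\theta = \theta^{(1)} + \cdots + \theta^{(\ell)}$ accordingly, it suffices by equivalence of norms on $\rmm$ to prove the estimates for the norm $\|\theta\| = \sum_{i=1}^\ell \|\theta^{(i)}\|_{V_i}$, and then for each block separately, because $\|t^D\theta\| = \sum_i \|t^{D_i}\theta^{(i)}\|_{V_i}$ and the coordinates $\theta_j$ belonging to block $i$ contribute $\sum_{j \in \text{block } i} t^{\lambda_j + \varepsilon}|\theta_j|$ on the right-hand side.

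For part (a), fix a block $D_i$ all of whose eigenvalues have real part equal to some $\lambda$ (one of the $\lambda_k$'s, possibly repeated according to the block size). Given $\varepsilon > 0$, every eigenvalue of $D_i$ has real part $\lambda < \lambda + \varepsilon =: \beta_1$, so Proposition \ref{auxprp}(a) applied to $A = D_i$ yields a constant $C > 0$ with $\|t^{D_i}\theta^{(i)}\|_{V_i} \geq C\, t^{\lambda+\varepsilon}\|\theta^{(i)}\|_{V_i}$ for all $0 < t \leq t_0$. Since $\|\theta^{(i)}\|_{V_i}$ is, up to a constant depending only on the block, comparable to $\sum_{j \in \text{block }i}|\theta_j|$, and since for the coordinates $j$ in this block the exponent $\lambda_j + \varepsilon$ equals $\lambda + \varepsilon$, summing over the blocks and taking $C_1$ to be the minimum of the resulting constants gives the claim. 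Part (b) is entirely analogous: apply Proposition \ref{auxprp}(b) to $A = D_i$ with $\beta_2 = -\lambda + \varepsilon$ — note that $s^{-D_i} = (s^{-1})^{D_i}$ and one rephrases the statement of \ref{auxprp}(b) in terms of $s^{-D}$, or equivalently applies part (a)'s companion bound — to obtain $\|s^{-D_i}\theta^{(i)}\|_{V_i} \leq C\, s^{-\lambda+\varepsilon}\|\theta^{(i)}\|_{V_i}$ for $s \geq s_0$, then sum and take $C_2$ to be the maximum of the block constants.

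I do not anticipate a serious obstacle here; the statement is essentially a bookkeeping repackaging of Proposition \ref{auxprp}. The one point requiring a little care is the matching of exponents: the corollary writes a single sum $\sum_{j=1}^m t^{\lambda_j+\varepsilon}|\theta_j|$ indexed by coordinates, whereas the natural estimate is blockwise, so one must check that each coordinate $\theta_j$ indeed sits in a block whose common eigenvalue-real-part is exactly $\lambda_j$ (with the $\lambda_j$ listed with multiplicity, as in the standing assumptions), which is immediate from the description of the real canonical form given above. A secondary subtlety is that Proposition \ref{auxprp}(b) as stated bounds $\|s^A x\|$ for large $s$, so to get a bound on $\|s^{-D}x\|$ one applies it with the roles adjusted; alternatively one notes that $-D$ has eigenvalues with negative real parts, so one instead invokes the large-$s$ growth bound directly for $s^{-D}$ after rewriting, and absorbs the $\varepsilon$ exactly as in (a). Neither point affects the structure of the argument.
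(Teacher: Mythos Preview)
Your proposal is correct and follows essentially the same route as the paper: reduce to the $1$-norm by equivalence of norms, use the block structure of the real canonical form so that $t^D$ acts independently on each block, and apply Proposition~\ref{auxprp} blockwise with $\beta_1 = \lambda + \varepsilon$. The only cosmetic difference is that the paper groups all Jordan cells sharing the same eigenvalue real part into a single block $J_j$ (indexed by the $k$ distinct real parts $\overline{\lambda}_1,\dots,\overline{\lambda}_k$), whereas you decompose down to the individual Jordan cells; since every cell in such a grouping has the same real part, the two decompositions yield identical estimates.
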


\begin{proof}
We only prove part (a). Part (b) is left to the reader. Throughout this proof let $c$ be an unspecified positive constant which might change in each occurence. Assume that the distinct real parts of the eigenvalues of $D$ are given by $\overline{\lambda}_1, \ldots,  \overline{\lambda}_k$ for some $k \leq m$ and let us write
\begin{equation*}
\left(
\begin{array}{cccccc}
J_1 &  &  &     \\
 & J_2 &     \\
 &  &  \ddots &   \\
 & & & J_k    \\
\end{array} \right) ,
\end{equation*}
for some block matrices $J_j$ so that each $J_j$ is associated with $\overline{\lambda}_j$, $1 \leq j \leq k$. Furthermore, write $\theta = (\overline{\theta}_1, \ldots, \overline{\theta}_k)$ for any $\theta = (\theta_1, \ldots, \theta_m) \in \rmm$ and let $\| \theta \|_1 = \sum_{j=1}^m |\theta_j|$ be the $1$-norm on $\rmm$. Then, by Proposition \ref{auxprp}, for all $\varepsilon>0$, $t_0>0$ and all $0 < t \leq t_0$ we have
\begin{align*}
\| t^D \theta \| & \geq c  \| t^{D} \theta \|_1 = c \sum_{j=1}^k \| t^{J_j}  \overline{\theta}_j \|_1 \\
& \geq c \sum_{j=1}^k t^{\overline{\lambda}_j + \varepsilon} \| \overline{\theta}_j \|_1 = c \sum_{j=1}^m t^{\lambda_j + \varepsilon} | \theta_j | ,
\end{align*}
where we used the equivalence of norms in the first inequality.
\end{proof}

\section{Harmonizable representation}

Harmonizable stable random vector fields are defined as stochastic vector integrals of deterministic matrix kernels with respect to a stable random vector measure. More precisely, let $\alpha \in (0,2]$, $W_\alpha (du)$ be a $\mathbb{C}^m$-valued isotropic $\alpha$-stable random measure on $\rd$ with Lebesgue control measure (see \cite[Definition 2.1]{LiXiao}) and let $Q(u) = Q_1 (u) + i Q_2 (u)$, where $\{Q_1 (u) : u \in \rd \}$ and $\{Q_2 (u) : u \in \rd \}$ are two families of real $m \times m$ matrices. Let us recall (see \cite[Theorem 2.4]{LiXiao}) that the stochastic integral
$$W_\alpha (Q) := \operatorname{Re} \int_{\rd} Q(u) W_\alpha (du)$$
is well-defined if and only if
$$ \int_{\rd} \big( \|Q_1(u) \|_m^\alpha + \|Q_2(u) \|_m^\alpha \big) du < \infty,$$
where $\|A \|_m = \max_{\|u\| = 1} \|Au\|$ is the operator norm for any matrix $A \in \mathbb{R}^{m \times m}$. Furthermore, in the latter case $W_\alpha(Q)$ is a stable $\mathbb{R}^m$-valued random variable with characteristic function given by
\begin{equation} \label{characfct}
\quad \mathbb{E} \big[ \exp \big(i \langle \theta, W_\alpha(Q) \rangle \big) \big] = \exp \Big( -  \int_{\rd} \big( \sqrt{ \|Q_1(u) \theta \|^2 + \|Q_2(u) \theta \|^2  }\big) ^\alpha du \Big) .
\end{equation}
for all $\theta \in \rmm$. Note that $W_2(Q)$ is a centered Gaussian random vector. Let $\psi : \rd \to [0, \infty )$ be a continuous $E^T$-homogeneous function, which means according to \cite[Definition 2.6]{BMS} that
\begin{equation*}
	\psi (c^{E^T} x) = c \psi (x)  \quad \text{for all } c>0.
\end{equation*}
Moreover assume that $\psi (x) \neq 0$ for $x \neq 0$ and let $I_m$ be the identity operator on $\rmm$. Recall that $q= \operatorname{trace}(E)$. Li and Xiao \cite{LiXiao} proved the following.

\begin{theorem}\label{harmonizable}
If \eqref{eigenvalues} is fullfilled, the random field
\begin{equation}\label{harmon}
X_{\alpha} (x) = \operatorname{Re} \int_{\rd} ( e^{i \langle x, y \rangle} -1 ) \psi (y) ^{-D - \frac{q I_m}{\alpha}} W_{\alpha} (d y) , \quad x \in \rd
\end{equation}
is well defined and called harmonizable $(E,D)$-operator-self-similar random field.
\end{theorem}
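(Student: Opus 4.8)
The plan is to verify the two assertions contained in the statement: that the stochastic integral in \eqref{harmon} is well defined --- i.e.\ that for every $x\in\rd$ the kernel $Q(y)=(e^{i\langle x,y\rangle}-1)\,\psi(y)^{-D-qI_m/\alpha}$ satisfies the integrability condition $\int_{\rd}(\|Q_1(y)\|_m^\alpha+\|Q_2(y)\|_m^\alpha)\,dy<\infty$ of \cite[Theorem 2.4]{LiXiao} --- and that the resulting field obeys the scaling relation \eqref{OSS} with the given matrices $E$ and $D$. I would treat these two points in turn.

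For well-definedness, I would first note that $\psi(y)^{-D-qI_m/\alpha}=\psi(y)^{-q/\alpha}\psi(y)^{-D}$ is a real matrix (since $\psi(y)>0$ for $y\neq0$), so $Q_1(y)=(\cos\langle x,y\rangle-1)\psi(y)^{-q/\alpha}\psi(y)^{-D}$ and $Q_2(y)=\sin\langle x,y\rangle\,\psi(y)^{-q/\alpha}\psi(y)^{-D}$; then, using $|\cos t-1|^\alpha+|\sin t|^\alpha\le c\min(1,|t|)^\alpha$ and $|\langle x,y\rangle|\le\|x\|\,\|y\|$, the claim reduces to
\[
\int_{\rd}\min\bigl(1,\|x\|\,\|y\|\bigr)^\alpha\,\psi(y)^{-q}\,\|\psi(y)^{-D}\|_m^\alpha\,dy<\infty .
\]
I would then pass to polar coordinates $y=\tau^{E^T}\theta$ adapted to $E^T$ in the sense of \cite[Chapter 6]{MeersScheff}, so that $dy$ produces a factor $\tau^{q-1}\,d\tau$ (recall $\operatorname{trace}(E^T)=q$), a finite measure on the compact unit sphere $S_{E^T}$, and a bounded factor; by $E^T$-homogeneity $\psi(\tau^{E^T}\theta)=\tau\,\psi(\theta)$, with $\psi(\theta)$ bounded away from $0$ and $\infty$ on $S_{E^T}$, so that $\psi(y)^{-q}\asymp\tau^{-q}$ and $\|\psi(y)^{-D}\|_m\asymp\|\tau^{-D}\|_m$ uniformly in $\theta$. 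Proposition \ref{auxprp}(b) and Corollary \ref{auxcor}(b), together with the corresponding estimate for $E^T$ (see \cite[Chapter 2]{MeersScheff}), then give, for arbitrarily small $\varepsilon>0$ and a suitable constant $c$, the bounds $\|\tau^{-D}\|_m\le c\,\tau^{-\lambda_m-\varepsilon}$ and $\|\tau^{E^T}\theta\|\le c\,\tau^{a_1-\varepsilon}$ for $0<\tau\le1$, and $\|\tau^{-D}\|_m\le c\,\tau^{-\lambda_1+\varepsilon}$ for $\tau\ge1$. Splitting the $\tau$-integral at $1$ and integrating out $\theta$ against the finite sphere measure: on $(0,1]$ bound $\min(1,\|x\|\,\|y\|)\le c\,\tau^{a_1-\varepsilon}$, so the $\tau$-integrand is $\lesssim\tau^{\alpha(a_1-\lambda_m-2\varepsilon)-1}$, which is integrable near $0$ once $\varepsilon<(a_1-\lambda_m)/2$, possible because $a_1>1>\lambda_m$ by \eqref{eigenvalues}; on $[1,\infty)$ bound $\min(1,\cdot)\le1$, so the $\tau$-integrand is $\lesssim\tau^{-\alpha(\lambda_1-\varepsilon)-1}$, integrable at $\infty$ once $\varepsilon<\lambda_1$.

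For the scaling relation I would fix $c>0$, write $\langle c^Ex,y\rangle=\langle x,c^{E^T}y\rangle$ in \eqref{harmon}, and substitute $y=c^{-E^T}u$. Then $E^T$-homogeneity gives $\psi(c^{-E^T}u)=c^{-1}\psi(u)$, whence $\psi(c^{-E^T}u)^{-D-qI_m/\alpha}=(c^{-1})^{-D-qI_m/\alpha}\psi(u)^{-D-qI_m/\alpha}=c^{q/\alpha}c^{D}\psi(u)^{-D-qI_m/\alpha}$ (scalar powers of commuting operators multiply), while the image random measure $\widetilde W_\alpha(\cdot):=W_\alpha(c^{-E^T}\cdot)$ is again isotropic $\alpha$-stable, now with control measure $|\det c^{-E^T}|\,du=c^{-q}\,du$, so $\{\widetilde W_\alpha(A)\}_A\stackrel{\rm d}{=}\{c^{-q/\alpha}W_\alpha(A)\}_A$. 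Pulling the constant real matrix $c^{D}$ out through $\operatorname{Re}$ and cancelling the scalars $c^{q/\alpha}$ and $c^{-q/\alpha}$ then gives $X_\alpha(c^Ex)\stackrel{\rm d}{=}c^{D}X_\alpha(x)$; since the substitution and the distributional identity for $\widetilde W_\alpha$ do not depend on $x$, the identity holds jointly over any finite collection of points $x_1,\dots,x_n$, which is \eqref{OSS}. Equivalently, one can write the joint characteristic function of $(X_\alpha(c^Ex_1),\dots,X_\alpha(c^Ex_n))$ from \eqref{characfct} and carry out the same change of variables under the integral sign.

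The step I expect to be the main obstacle is the integrability estimate --- in particular, controlling $\|\tau^{\pm D}\|_m$ and $\|\tau^{E^T}\theta\|$ by the correct powers of $\tau$, uniformly over the sphere, which is exactly what Proposition \ref{auxprp}, Corollary \ref{auxcor} and the polar-coordinate machinery of \cite[Chapter 6]{MeersScheff} supply, up to the arbitrarily small $\varepsilon$-loss forced by nontrivial Jordan blocks of $D$ and $E$; the normalization $\lambda_m<1<a_1$ from \eqref{eigenvalues} is precisely what makes the near-origin integral converge. The scaling identity is then a routine linear change of variables combined with the homogeneity of $\psi$ and the behaviour of the $\alpha$-stable random measure under linear maps.
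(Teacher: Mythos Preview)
Your argument is correct, but note that the paper does not actually prove this theorem: the sentence ``Li and Xiao \cite{LiXiao} proved the following'' immediately precedes the statement, so Theorem \ref{harmonizable} is quoted from \cite{LiXiao} without proof. Your write-up is a faithful reconstruction of the standard proof (essentially the one in \cite{LiXiao}, which in turn follows \cite{BMS}): reduce well-definedness to the integrability condition of \cite[Theorem 2.4]{LiXiao}, pass to $E^T$-polar coordinates, use the growth bounds on $\|\tau^{\pm D}\|_m$ and $\|\tau^{E^T}\theta\|$ from \cite[Proposition 2.2.11]{MeersScheff} (of which Proposition \ref{auxprp} records two parts), and exploit \eqref{eigenvalues} to make the near-origin integral converge; then obtain \eqref{OSS} by the change of variables $y\mapsto c^{-E^T}y$ together with the $E^T$-homogeneity of $\psi$ and the scaling of the isotropic $\alpha$-stable random measure. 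One minor comment: you invoke Corollary \ref{auxcor}(b) for the operator-norm bound on $\|\tau^{-D}\|_m$, but that corollary is stated for $D$ in real canonical form; for the present purpose Proposition \ref{auxprp} alone (applied to $\pm D$) already gives the needed operator-norm estimates without any assumption on the form of $D$.
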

From Theorem 2.6 in \cite{LiXiao} $X_\alpha$ is a proper, stochastically continuous random field with stationary increments and satisfies the scaling property \eqref{OSS}. Let us recall that an $\rmm$-valued random field $\{ Y(t) : t \in \rd\}$ is said to be proper if for every $t \in \rd$ the distribution of $Y(t)$ is full, i.e. it is not supported on any proper hyperplane in $\rmm$.

As noted above, S\"onmez \cite{Soenmez} studied the sample path properties of $X_\alpha$ in the Gaussian case $\alpha =2$. We will derive similar results in this paper for $X_\alpha$ with $\alpha \in (0,2)$. We first give an upper bound on the modulus of continuity of the components in the next Section.

\section{Modulus of continuity}

Throughout this Section assume that $\alpha \in (0,2)$. For notational convenience let us surpress the subscript $\alpha$ and simply write $X$ instead of $X_\alpha$. Furthermore, let $\tau_E(\cdot )$ be the radial part of polar coordinates with respect to $E$ introduced in \cite[Chapter 6]{MeersScheff} (see also \cite{BMS, BL, BL2, LiXiao, Soenmez}). The following is the main result of this Section.

\begin{prop} \label{modulusofcontinuity}
Assume that the operator $D$ is of the real canonical form. Then, there exists a modification $X^*$ of $X$ such that for any $\varepsilon >0$ and any $\delta >0$
\begin{align}\label{modulus}
 \sup_{\substack{u,v \in [0,1]^d\\ u \neq v}} \frac{|X^*_j(u)-X^*_j(v)|}{\tau_E(u-v)^{\lambda_j - \varepsilon} \big[ \log \big( 1 + \tau_E(u-v)^{-1} \big) \big]^{\delta+\frac{1}{2} + \frac{1}{\alpha}} } < \infty 
\end{align}
holds almost surely for all $j=1, \ldots, m$. In particular, for every $\varepsilon >0$ and $j=1, \ldots, m$, there exists a constant $C_{4,1}>0$ such that $X^*$ satisfies a.s.
\begin{align}\label{continuity}
|X^*_j(u)-X^*_j(v)| \leq C_{4,1} \tau_E(u-v)^{\lambda_j-\varepsilon} \quad \text{for all} \quad u, v \in [0,1]^d.
\end{align}
\end{prop}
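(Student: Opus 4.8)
The plan is to reduce the continuity estimate \eqref{modulus} to a chaining/entropy argument applied to each scalar component $X_j$, using the harmonizable representation \eqref{harmon} together with the estimates from Corollary \ref{auxcor} and the modulus-of-continuity results for harmonizable stable fields from \cite{BL2}. First I would fix $j$ and compute, from the characteristic function \eqref{characfct}, the scale of the increment $X_j(u)-X_j(v)$: writing $Q(y) = (e^{i\langle u,y\rangle} - e^{i\langle v,y\rangle})\psi(y)^{-D-\frac{qI_m}{\alpha}}$, the $e_j$-th component has scale parameter
\begin{equation*}
\sigma_j(u,v)^\alpha = \int_{\rd} \big| e^{i\langle u,y\rangle} - e^{i\langle v,y\rangle}\big|^\alpha \, \big\| \psi(y)^{-D^T - \frac{qI_m}{\alpha}} e_j \big\|^\alpha \, dy .
\end{equation*}
Passing to polar coordinates with respect to $E^T$ (so that $\psi(y) = \tau_{E^T}(y)$ up to the homogeneity normalization, and using $E^T$-homogeneity of $\psi$), substituting $r = \tau_{E^T}(y)$ and rescaling by $\tau_E(u-v)$, the bound $\|r^{-D^T}e_j\| \leq C r^{-\lambda_j+\varepsilon}$ for large $r$ from Corollary \ref{auxcor}(b) (and the companion lower/upper bounds for small $r$) should yield $\sigma_j(u,v) \leq C\,\tau_E(u-v)^{\lambda_j - \varepsilon}$ for every $\varepsilon>0$, uniformly over $u,v\in[0,1]^d$. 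This is the step where $E$-homogeneity of $\psi$ and the polar-coordinate integration formula (\cite[Chapter 6]{MeersScheff}) are used; the constant picks up a factor depending on $\varepsilon$ and the eigenvalue multiplicities through Corollary \ref{auxcor}.

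Next I would invoke the abstract modulus-of-continuity theorem for harmonizable $\alpha$-stable random fields from \cite{BL2}: once one controls the spectral/scale function of the increments by a function of $\tau_E(u-v)$, that machinery produces a modification $X^*$ whose increments are bounded, on $[0,1]^d$, by $\sigma_j(u,v)$ times a logarithmic correction factor $[\log(1+\tau_E(u-v)^{-1})]^{\delta + \frac12 + \frac1\alpha}$ almost surely — the exponent $\frac12 + \frac1\alpha$ being the usual price for the $\alpha$-stable tails plus the metric-entropy (chaining) term, and $\delta>0$ absorbing lower-order factors. Since $\tau_E$ is a quasi-metric comparable to a power of the Euclidean metric on $[0,1]^d$, the entropy of the index set in the $\tau_E$-geometry is finite, so the chaining sum converges. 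Combining this with the scale bound from the previous paragraph gives \eqref{modulus} directly. Then \eqref{continuity} is immediate: the supremum in \eqref{modulus} being a.s.\ finite means $|X^*_j(u)-X^*_j(v)| \leq C\,\tau_E(u-v)^{\lambda_j-\varepsilon}[\log(1+\tau_E(u-v)^{-1})]^{\delta+\frac12+\frac1\alpha}$, and the logarithmic factor is dominated by $\tau_E(u-v)^{-\varepsilon'}$ for any $\varepsilon'>0$ as $\tau_E(u-v)\to 0$; replacing $\varepsilon$ by $\varepsilon+\varepsilon'$ (legitimate since $\varepsilon$ was arbitrary) and noting $\tau_E$ is bounded on $[0,1]^d$ yields \eqref{continuity} with a single random constant $C_{4,1}$.

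The main obstacle I anticipate is the scale estimate: getting the clean bound $\sigma_j(u,v) \leq C_\varepsilon\,\tau_E(u-v)^{\lambda_j-\varepsilon}$ uniformly in $u,v$ requires splitting the spectral integral into the region $\tau_{E^T}(y)$ large and $\tau_{E^T}(y)$ small (after the rescaling), handling the oscillatory factor $|e^{i\langle u,y\rangle}-e^{i\langle v,y\rangle}|^\alpha \le \min\{2,|\langle u-v,y\rangle|\}^\alpha$ carefully on each region, and checking integrability near $r=0$ and $r=\infty$ — the latter is exactly where the shift by $\frac{qI_m}{\alpha}$ in the exponent of $\psi$ guarantees convergence, and where the eigenvalue gap and the multiplicities $\mu_i$ enter through the polynomial-in-$\log r$ corrections hidden in Corollary \ref{auxcor}. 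A secondary technical point is verifying that the hypotheses of the \cite{BL2} modulus theorem are met by our kernel (measurability, the required regularity of $\psi$, and the quasi-metric structure of $\tau_E$); I expect this to be routine given that \eqref{harmon} is precisely of the form treated there, but it should be stated explicitly.
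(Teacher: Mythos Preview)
Your approach is correct and uses the same two ingredients as the paper --- Corollary \ref{auxcor} and the Bierm\'e--Lacaux modulus result from \cite{BL2} --- but the paper's route is shorter and sidesteps the scale-parameter computation you flag as the main obstacle. The paper observes that, by \eqref{characfct}, the $j$-th component $X_j$ is equal in law (up to a multiplicative constant) to a scalar harmonizable field $Y$ of the form \eqref{auxrf} with kernel $\psi_\alpha(\xi) = \|\psi(\xi)^{-D - \frac{q}{\alpha}I_m} e_j\|$ (no transpose on $D$, incidentally). The hypothesis of Lemma \ref{helplemma} --- the pointwise bound $|\psi_\alpha(\xi)| \leq c_\psi\, \tau_{E^T}(\xi)^{-\beta - q/\alpha}$ for $\|\xi\|$ large --- then follows in one line from Corollary \ref{auxcor}(b) with $\beta = \lambda_j - \varepsilon$, and Lemma \ref{helplemma} delivers \eqref{modulus} directly. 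In other words, you never need to split the spectral integral, handle the oscillatory factor, or estimate $\sigma_j(u,v)$ yourself: all of that is already packaged inside the black-box Lemma \ref{helplemma}, whose input is a \emph{pointwise} kernel bound rather than an integrated scale bound. Your passage from \eqref{modulus} to \eqref{continuity} is exactly right. The only residual technical point the paper addresses is transferring the modification from $Y^*$ back to $X_j$, which uses stochastic continuity of $X_j$ together with the construction in the proof of \cite[Proposition 5.1]{BL2}.
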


In order to prove Proposition \ref{modulusofcontinuity} we recall a result that has recently been established by Bierm\'e and Lacaux \cite{BL2}. The key point is to remark that the components $X_j$, $1 \leq j \leq m$, behave like one-dimensional operator scaling harmonizable random fields given in \cite{BMS}. Let $M_\alpha (d\xi)$ be a complex isotropic $\alpha$-stable random measure on $\rd$ with Lebesgue control measure as introduced in \cite[p. 281]{SamorodTaqq}. Furthermore, let $Y$ be a scalar valued random field defined through the stochastic integral
\begin{equation} \label {auxrf}
Y = \Big( \operatorname{Re} \int_{\rd} f_\alpha (u, \xi ) M_\alpha (d\xi) \Big) _{u \in \rd } ,
\end{equation}
where $f_\alpha (u, \cdot ) \in L^\alpha ( \rd )$ is given by
\begin{align*}
f_\alpha (u, \xi ) = ( e^{i \langle u, \xi \rangle} -1 ) \psi_\alpha (\xi) \quad \forall  (u, \xi) \in \rd \times \rd ,
\end{align*}
with a Borel measureable function $\psi_\alpha : \rd \to \mathbb{C}$ satisfying
$$ \int_{\rd} \min (1, \| \xi \|^\alpha ) |\psi_\alpha (\xi ) |^\alpha d\xi < \infty .$$

Then, Bierm\'e and Lacaux \cite{BL2} proved the following.

\begin{lemma} \label{helplemma}
Assume that there exist some positive and finite constants $c_\psi, K$ and $\beta \in (0, a_1)$ such that
$$|\psi_\alpha ( \xi ) | \leq c_\psi \tau_{E^T} ( \xi )^{- \beta - \frac{q}{\alpha}} $$
holds for almost every $\xi \in \rd$ with $\| \xi \| > K$. Then, there exists a modification $Y^*$ of $Y$ such that almost surely for every $\delta >0$
\begin{align}\label{auxmodulus}
 \sup_{\substack{u,v \in [0,1]^d\\ u \neq v}} \frac{|Y^*(u)-Y^*(v)|}{\tau_E(u-v)^{\beta} \big[ \log \big( 1 + \tau_E(u-v)^{-1} \big) \big]^{\delta+\frac{1}{2} + \frac{1}{\alpha}} } < \infty .
\end{align}
\end{lemma}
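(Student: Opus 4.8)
The plan is to establish \eqref{auxmodulus} in two stages: first I would control the scale parameter of a single increment $Y(u)-Y(v)$ in terms of $\tau_E(u-v)$, and then upgrade this one-point control to an almost sure uniform modulus of continuity by a chaining argument adapted to the heavy tails of $\alpha$-stable laws.

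For the first stage, observe that $Y(u)-Y(v)=\operatorname{Re}\int_{\rd}(e^{i\langle u-v,\xi\rangle}-1)\psi_\alpha(\xi)\,M_\alpha(d\xi)$ depends only on $h:=u-v$, and that a real harmonizable stable integral has scale parameter $\sigma(h)$ with $\sigma(h)^\alpha=\int_{\rd}|e^{i\langle h,\xi\rangle}-1|^\alpha|\psi_\alpha(\xi)|^\alpha\,d\xi$. I would estimate $\sigma(h)$ through the polar decomposition $h=\tau_E(h)^E\bar h$, where $\bar h$ lies on the $\tau_E$-unit sphere. Writing $s=\tau_E(h)$ and substituting $\eta=s^{E^T}\xi$ turns $\langle h,\xi\rangle$ into $\langle\bar h,\eta\rangle$, sends $d\xi$ to $s^{-q}\,d\eta$, and, by the $E^T$-homogeneity of $\tau_{E^T}$, gives $\tau_{E^T}(s^{-E^T}\eta)=s^{-1}\tau_{E^T}(\eta)$. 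On the range $\tau_{E^T}(\eta)>sK$ the decay hypothesis yields $|\psi_\alpha(s^{-E^T}\eta)|^\alpha s^{-q}\le c_\psi^\alpha\,s^{\alpha\beta}\,\tau_{E^T}(\eta)^{-\alpha\beta-q}$, so this part is at most $c\,s^{\alpha\beta}\int_{\rd}|e^{i\langle\bar h,\eta\rangle}-1|^\alpha\tau_{E^T}(\eta)^{-\alpha\beta-q}\,d\eta$; the remaining integral is finite and bounded uniformly in $\bar h$ because, in polar coordinates (radial factor $r^{q-1}\,dr$), the integrand is integrable near the origin, using $|e^{iz}-1|\le|z|$ and $\|\eta\|\le c\,r^{a_1-\varepsilon}$ from the analogue of Corollary \ref{auxcor}(b) for $E^T$, precisely because $\beta<a_1$, and integrable at infinity because $\beta>0$. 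On the complementary bounded range $\tau_{E^T}(\xi)\le K$ the elementary bound $|e^{i\langle h,\xi\rangle}-1|\le\|h\|\,\|\xi\|$ together with the standing integrability of $\psi_\alpha$ gives a contribution $O(\|h\|^\alpha)=O(\tau_E(h)^{\alpha(a_1-\varepsilon)})$, negligible against $s^{\alpha\beta}$ since $a_1>\beta$. Altogether $\sigma(u,v)\le C\,\tau_E(u-v)^\beta$ for $u,v\in[0,1]^d$.

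The second stage is where the real difficulty lies. A naive union bound is hopeless: the $\alpha$-stable tail estimate $\mathbb P(|Y(u)-Y(v)|>\lambda)\le c\,\sigma(u,v)^\alpha\lambda^{-\alpha}$ decays far too slowly to survive a chaining over a dyadic net of $[0,1]^d$. Instead I would exploit the conditionally Gaussian structure of the harmonizable integral. Using a LePage-type series representation of $\int f_\alpha(u,\cdot)\,dM_\alpha$, obtained after decomposing the frequency domain into $\tau_{E^T}$-dyadic shells, the field $Y$ becomes, conditionally on the Poisson arrival times and the random frequencies, a Gaussian field whose conditional increment standard deviation is bounded by $A\,\tau_E(u-v)^\beta$ with $A$ a random amplitude assembled from the arrival times. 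Applying the Gaussian modulus of continuity (Dudley's metric-entropy bound, or Fernique's inequality) to this conditional field produces the factor $[\log(1+\tau_E(u-v)^{-1})]^{1/2}$, i.e.\ the exponent $\tfrac12$. It then remains to control $A$, which has a heavy $\alpha$-stable-type tail $\mathbb P(A>\lambda)\lesssim\lambda^{-\alpha}$; making the estimate uniform over the net at scale $2^{-n}$ via Borel--Cantelli costs the additional factor $[\log(1+\tau_E(u-v)^{-1})]^{1/\alpha}$, and the arbitrary $\delta>0$ supplies the summability margin that closes the Borel--Cantelli step. Multiplying the two logarithmic contributions reproduces exactly the exponent $\delta+\tfrac12+\tfrac1\alpha$ in \eqref{auxmodulus}, while the uniform continuity obtained on a countable dense set defines the modification $Y^*$ by continuous extension.

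I expect the reconciliation of the heavy-tailed amplitude with the Gaussian shape to be the crux: arranging that the two logarithmic powers add with the correct constants, and that the conditional entropy estimate is uniform over all pairs $u,v\in[0,1]^d$ rather than merely over a fixed net, is the delicate multiscale bookkeeping that constitutes the technical heart of the Bierm\'e--Lacaux estimate, and it is this that I would need to carry out in full.
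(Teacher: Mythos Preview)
The paper does not prove this lemma at all: it is quoted verbatim as a result of Bierm\'e and Lacaux \cite{BL2}, introduced by the sentence ``Then, Bierm\'e and Lacaux \cite{BL2} proved the following.'' There is therefore no in-paper proof to compare your attempt against; the lemma functions as an imported black box, and all the paper does with it is apply it to the particular choice $\psi_\alpha(\xi)=\|\psi(\xi)^{-D-\frac{q}{\alpha}I_m}e_j\|$ in the proof of Proposition~\ref{modulusofcontinuity}.

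Your sketch is, in fact, a faithful outline of the Bierm\'e--Lacaux strategy itself: the scale-parameter bound $\sigma(h)\le C\,\tau_E(h)^\beta$ via the polar substitution $\eta=s^{E^T}\xi$, followed by a LePage series representation that renders the field conditionally Gaussian, a Dudley/Fernique entropy bound contributing the $\log^{1/2}$ factor, and a Borel--Cantelli control of the random amplitude contributing the $\log^{1/\alpha}$ factor. You correctly flag the multiscale bookkeeping as the delicate part. So your proposal is not an alternative route but rather a summary of the very argument the paper is citing; for the purposes of this paper you could simply invoke \cite{BL2} as the author does.
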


\noindent \textit{Proof of Proposition \ref{modulusofcontinuity}.}
Fix $1 \leq j \leq m$ and denote by $(e_1, \ldots, e_m)$ the canonical basis of $\rmm$. The main idea is to apply Lemma \ref{helplemma} with an appropriate choice of the function $\psi_\alpha$. Indeed, let $Y$ be the random field given in \eqref{auxrf} with
$$\psi_\alpha (\xi) = \| \psi ( \xi ) ^{-D - \frac{q}{\alpha} I_m} e_j \| ,$$
where $\| \cdot \|$ is any arbitrary norm on $\rmm$. Using \eqref{characfct}, it is easy to see that, up to a multiplicative constant,
$$\{X_j(u) : u \in \rd \} \stackrel{\rm d}{=} \{ Y(u) : u \in \rd \} .$$
Since $\psi$ is $E^T$-homogeneous, from Corollary \ref{auxcor} one easily checks that
$$ \| \psi (\xi )  ^{-D - \frac{q}{\alpha} I_m} e_j \|  \leq c_\psi \tau_{E^T} ( \xi ) ^{- (\lambda_j - \varepsilon) - \frac{q}{\alpha}} $$
for all $\xi \in \rd$ with $\| \xi \| > K$ and some $c_\psi \in (0 , \infty )$. Therefore, by Lemma \ref{helplemma}, there exists a modification $Y^*$ of $Y$ such that $Y^*$ almost surely satisfies \eqref{auxmodulus} with $\beta = \lambda_j - \varepsilon$. Further note that, since $Y^*$ is a modification of $Y$, we have
$$\{X_j(u) : u \in \rd \} \stackrel{\rm d}{=} \{ Y^*(u) : u \in \rd \} ,$$
so that $X_j$ almost surely satisfies \eqref{modulus} for countably many $u,v \in [0,1]^d$. Using this and the fact that $X_j$ is stochastically continuous, exactly as in the proof of \cite[Proposition 5.1]{BL2} one can define a modification $X_j^*$ of $X_j$ such that \eqref{modulus} holds. We omit the details.
\hfill $\Box$
\newline

Proposition \ref{modulusofcontinuity} compared to \cite[Proposition 4.6]{Soenmez} shows that $(E,D)$-operator-self-similar stable random fields share the same kind of upper bound for the modulus of continuity as the Gaussian ones. Therefore it is natural to have also the same results of \cite[Theorem 4.1]{Soenmez} for the Hausdorff dimension of their images and graphs on $[0,1]^d$, which we state in the next Section. Furthermore, we  refer the reader to \cite{Fal, Mattila} for the definition and properties of the Hausdorff dimension.

\section{Hausdorff dimension of the image and the graph}

The main result of this Section is the following.

\begin{theorem}\label{hausdorff}
Let the assumptions of the previous Sections hold and let $\alpha \in (0,2)$. Then, almost surely
\begin{align}\label{imdim}
	\dim_{\mathcal{H}} X ([0,1]^d) &= \min \Big\{ m, \frac{ \sum_{k=1}^p a_k \mu_k + \sum_{i=1}^j (\lambda_j - \lambda_i) }{ \lambda_j }, 1 \leq j \leq m \Big\}
\end{align}

\begin{align}\label{imdim2}
	 &= \begin{cases}		
				 m & \mbox{if } \sum_{i=1}^m \lambda_i < \sum_{k=1}^p a_k \mu_k ,\\ 				 \frac{ \sum_{k=1}^p a_k \mu_k + \sum_{i=1}^l (\lambda_l - \lambda_i) }{ \lambda_l }  & \mbox{if }  \sum_{i=1}^{l-1} \lambda_i < \sum_{k=1}^p a_k \mu_k \leq \sum_{i=1}^{l} \lambda_i ,		
		\end{cases}
\end{align}

\begin{align}\label{graphdim}
	\dim_{\mathcal{H}} \Gr X ([0,1]^d) &= \min \Bigg\{ \frac{ \sum_{k=1}^p a_k \mu_k + \sum_{i=1}^j (\lambda_j - \lambda_i) }{ \lambda_j }, 1 \leq j \leq m, \\ 
	& \quad \sum_{j=1}^l \frac{\tilde{a}_j}{\tilde{a}_l} \tilde{\mu}_j + \sum_{j=l+1}^p  \tilde{\mu}_j + \sum_{i=1}^m (1- \frac{\lambda_i}{\tilde{a}_l}) , 1 \leq l \leq p \Bigg\} \nonumber
\end{align}
\begin{footnotesize}
\begin{align}\label{graphdim2}
	 &= \begin{cases} 
			\begin{aligned}
				& \dim_{\mathcal{H}} X ([0,1]^d) \,\,\,\quad\qquad\qquad\qquad\,\,\, \mbox{ if } \sum_{k=1}^p a_k \mu_k \leq \sum_{i=1}^m \lambda_i  ,\\ 		
		&   \sum_{j=1}^l \frac{\tilde{a}_j}{\tilde{a}_l}  \tilde{\mu}_j + \sum_{j=l+1}^p  \tilde{\mu}_j + \sum_{i=1}^m (1- \frac{\lambda_i}{\tilde{a}_l}) \mbox{ if } \sum_{k=1}^{l-1} \tilde{a}_k  \tilde{\mu}_k \leq \sum_{i=1}^m \lambda_i < \sum_{k=1}^{l} \tilde{a}_k  \tilde{\mu}_k ,
			\end{aligned}
		\end{cases}
\end{align}
\end{footnotesize}
where $\tilde{a}_j = a_{p+1-j}, \tilde{\mu}_j = \mu_{p+1-j}, 1 \leq j \leq p$ are defined as in Section 2.
\end{theorem}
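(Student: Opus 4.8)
The plan is to follow the classical two-sided strategy for computing Hausdorff dimensions of random fields: establish an upper bound via a covering argument built on the modulus of continuity in Proposition~\ref{modulusofcontinuity}, and a matching lower bound via a capacity (energy integral) argument using the fact that $X$ is $\alpha$-stable with a sufficiently regular, "locally nondegenerate" finite-dimensional structure. Since Proposition~\ref{modulusofcontinuity} gives exactly the same uniform Hölder control, componentwise, as in the Gaussian case, the geometric/combinatorial parts of the argument carry over verbatim from \cite{Soenmez, Xiao3}; only the probabilistic lower-bound estimates need to be redone for $\alpha \in (0,2)$. First I would reduce to the case where $E$ and $D$ are in real canonical form (conjugation by fixed invertible matrices changes neither Hausdorff dimension, since bi-Lipschitz images preserve it, nor the combinatorial quantities $a_k,\mu_k,\lambda_i$), so that Proposition~\ref{modulusofcontinuity} and Corollary~\ref{auxcor} apply directly.

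For the upper bound I would work on the "unit cell" $[0,1]^d$ and subdivide it using the anisotropic metric $\tau_E$: for small $r>0$, cover $[0,1]^d$ by $\tau_E$-balls, whose number scales like $r^{-q}$ with $q=\operatorname{trace}(E) = \sum_{k=1}^p a_k\mu_k$ (this is the standard fact about $\tau_E$ from \cite[Chapter 6]{MeersScheff}). On each such ball the oscillation of the $j$-th component $X^*_j$ is, by \eqref{continuity}, at most $C\, r^{\lambda_j-\varepsilon}$. Feeding these oscillation bounds into the state-space metric on $\rmm$ (respectively $\rd\times\rmm$ for the graph) and optimizing the choice of how finely to split each coordinate direction yields the $\min$ expressions in \eqref{imdim} and \eqref{graphdim}; passing from $\varepsilon>0$ to $\varepsilon=0$ gives the stated value. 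The equivalent piecewise forms \eqref{imdim2} and \eqref{graphdim2} are then a purely arithmetic rearrangement — sorting the $\lambda_i$ against the partial sums of $\tilde a_k\tilde\mu_k$ — and require no probability.

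For the lower bound I would use the standard Frostman-type criterion: it suffices to show that for $\gamma$ strictly below the claimed dimension, the expected $\gamma$-energy of the occupation measure (the pushforward of Lebesgue measure on $[0,1]^d$ under $X^*$, resp. under $x\mapsto(x,X^*(x))$) is finite, i.e.
\[
\int_{[0,1]^d}\int_{[0,1]^d} \Exp\big[\|X^*(u)-X^*(v)\|^{-\gamma}\big]\,du\,dv < \infty
\]
(with the obvious modification for the graph). Here is where the $\alpha$-stable case departs from the Gaussian one. I would first show a lower bound of the form $\|X(u)-X(v)\| \succeq$ (a diagonal $\alpha$-stable vector with scale parameters $\tau_E(u-v)^{\lambda_i}$ in direction $i$), using the harmonizable representation \eqref{harmon}, the characteristic function \eqref{characfct}, stationarity of increments, the scaling property \eqref{OSS}, and Corollary~\ref{auxcor}(a) to bound the relevant integral from below — this plays the role of "local nondeterminism" and is exactly the point at which Xiao's "approximately independent components" hypothesis can be dispensed with, because one only needs a one-sided bound on $\Exp[\|X(u)-X(v)\|^{-\gamma}]$, not genuine independence. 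Then I would invoke the elementary fact that for a (possibly degenerate, but proper) $S\alpha S$ random vector $Z$ in $\rmm$ with scale parameters $\sigma_1,\dots,\sigma_m$ along the axes, $\Exp\|Z\|^{-\gamma} \le C\prod_i \sigma_i^{-\gamma_i}$ for suitable exponents with $\sum\gamma_i=\gamma$, valid for all $\gamma<m$ (this uses only that the density of a one-dimensional $S\alpha S$ variable is bounded and that $\alpha\in(0,2)$ makes negative moments up to order $\alpha$, hence after the product decomposition up to order $m$, integrable — here the properness from Theorem~\ref{harmonizable} guarantees nondegeneracy of each component). Combining these and carrying out the resulting deterministic integral $\int\int \prod_i \tau_E(u-v)^{-\gamma_i\lambda_i}\,du\,dv$, which converges precisely when $\gamma$ is below the $\min$ in \eqref{imdim} (resp.\ \eqref{graphdim}), closes the gap. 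The graph case is handled the same way after adjoining the trivial "direction" contributed by $u$ itself, which accounts for the extra $\sum_{j}\tilde\mu_j$ and $\sum_i(1-\lambda_i/\tilde a_l)$ terms.

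The main obstacle, and the only genuinely new work relative to \cite{Soenmez}, is the probabilistic lower estimate on $\Exp[\|X(u)-X(v)\|^{-\gamma}]$: in the Gaussian case this follows from an explicit variance/covariance computation, but for $\alpha<2$ one must (i) extract, from the harmonizable kernel, a sharp lower bound on the "scale" of the increment in each eigen-direction of $D$, uniformly in $u,v\in[0,1]^d$, and (ii) control negative moments of a genuinely heavy-tailed, possibly degenerate stable vector without any independence of components. I expect (ii) to be manageable by a change of variables into the basis putting $D$ in canonical form, bounding $\|Z\|$ below by $|Z_i|$ for each $i$ and interpolating, together with the boundedness of one-dimensional stable densities; and (i) to reduce, via the scaling property and $\tau_E$-polar coordinates, to a compactness argument on the "unit sphere" $\{\tau_{E^T}(\xi)=1\}$ exactly as in \cite{BMS, BL2}. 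The remaining steps — the covering upper bound and the arithmetic simplifications \eqref{imdim2}, \eqref{graphdim2} — are routine adaptations of \cite{Soenmez, Xiao3} and I would only sketch them.
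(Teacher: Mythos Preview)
Your overall architecture matches the paper's: reduce to $D$ in real canonical form, get the upper bounds from the H\"older estimate \eqref{continuity} combined with the covering lemma in \cite{Soenmez}, and get the lower bounds by Frostman's criterion applied to the expected energy integral. You also correctly isolate the single place where new work is needed relative to the Gaussian case, namely the bound on $\Exp\big[\|X(t)-X(s)\|^{-\gamma}\big]$, and you correctly observe that Xiao's ``approximately independent components'' hypothesis is unnecessary.

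Where your route and the paper's genuinely diverge is in how that negative-moment bound is obtained. You propose to (i) dominate $X(t)-X(s)$ stochastically from below by a ``diagonal'' $S\alpha S$ vector with scales $\tau_E(t-s)^{\lambda_j}$, and then (ii) use boundedness of one-dimensional stable densities plus an interpolation $\|Z\|^{-\gamma}\le\prod_j|Z_j|^{-\gamma_j}$ to conclude $\Exp\|Z\|^{-\gamma}\le C\prod_j\sigma_j^{-\gamma_j}$. Step (ii) is not as elementary as you suggest: without independence of the $Z_j$ the product bound does not factor, and controlling $\Exp\prod_j|Z_j|^{-\gamma_j}$ for a general proper $S\alpha S$ vector would require information on the joint density that you have not supplied. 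The paper sidesteps this entirely by working purely on the Fourier side. First it proves a characteristic-function bound (Lemma~\ref{cfbound})
\[
\Exp\big[e^{i\langle X(t),\theta\rangle}\big]\le\exp\Big(-C\sum_{j=1}^m\big|\tau_E(t)^{\lambda_j+\varepsilon}|\theta_j|\big|^\alpha\Big),
\]
obtained via the $S\alpha S$ self-decomposition identity $m^{-1/\alpha}\sum_{j=1}^m X^{(j)}\stackrel{\rm d}{=}X(t)$ for i.i.d.\ copies $X^{(j)}$; this is the device that replaces any independence-of-components assumption and yields the product structure for free. Then it feeds this directly into the integral representation
\[
c\,\Exp\big[\|Y\|^{-\gamma}\big]=\int_0^\infty\!\!\int_{\rmm} e^{-\|y\|^2/2}\,\Exp\big[e^{i\langle uy,Y\rangle}\big]\,dy\,u^{\gamma-1}\,du
\]
from \cite{Xiao3}, and the remaining integrals are exactly those already evaluated in \cite{Xiao3,Soenmez}. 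For the graph, the paper again stays Fourier-analytic, using $\hat f_\gamma$ for $f_\gamma(\xi)=(1+\|\xi\|^2)^{-\gamma/2}$ rather than your ``adjoin the trivial direction'' description. Your approach could likely be made to work, but would need a genuine argument for the dependent negative-moment bound; the paper's Fourier route avoids that issue and recycles the deterministic integral computations from the Gaussian case verbatim.
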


Before proving Theorem \ref{hausdorff}, we first prove the following.

\begin{lemma} \label{cfbound}
Assume that $D$ is of the real canonical form. Then for all $t \in \rd, \theta \in \rmm$ and $\varepsilon >0$ there exists a constant $C_{5,1}>0$, depending on $\varepsilon$ only, such that
$$ \mathbb{E} \big[ \exp \big( i \langle X(t), \theta \rangle \big) \big] \leq \exp \Big( - C_{5,1} \sum_{j=1}^m \big| \tau_E (t) ^{\lambda_j + \varepsilon}  | \theta_j | \big| ^\alpha \Big) .$$
\end{lemma}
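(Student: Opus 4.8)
The plan is to start from the exact formula \eqref{characfct} for the characteristic function of the stable integral $W_\alpha(Q)$, applied with the kernel defining $X(t)$, namely $Q(u) = (e^{i\langle t,u\rangle}-1)\psi(u)^{-D-\frac{q}{\alpha}I_m}$, so that $Q_1,Q_2$ are the real and imaginary parts. This gives
\begin{equation*}
\mathbb{E}\big[\exp(i\langle\theta,X(t)\rangle)\big] = \exp\Big(-\int_{\rd}\big(\|Q_1(u)\theta\|^2+\|Q_2(u)\theta\|^2\big)^{\alpha/2}\,du\Big),
\end{equation*}
and since $\|Q_1(u)\theta\|^2+\|Q_2(u)\theta\|^2 = |e^{i\langle t,u\rangle}-1|^2\,\|\psi(u)^{-D-\frac{q}{\alpha}I_m}\theta\|^2$, the exponent equals $-\int_{\rd}|e^{i\langle t,u\rangle}-1|^\alpha\,\|\psi(u)^{-D-\frac{q}{\alpha}I_m}\theta\|^\alpha\,du$. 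So it suffices to bound this integral below by $C\sum_{j=1}^m \big(\tau_E(t)^{\lambda_j+\varepsilon}|\theta_j|\big)^\alpha$.

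Next I would pass to polar coordinates with respect to $E^T$ (as in \cite{MeersScheff, BMS}): write $u = \tau_{E^T}(u)^{E^T}\ell$ with $\ell$ on the unit sphere $S_{E^T}$, so $du = \tau_{E^T}(u)^{q-1}\,\sigma(d\ell)\,d\tau_{E^T}(u)$ up to constants. Using $E^T$-homogeneity of $\psi$, one has $\psi(u) = \tau_{E^T}(u)\psi(\ell)$, hence $\psi(u)^{-D-\frac{q}{\alpha}I_m} = \tau_{E^T}(u)^{-D-\frac{q}{\alpha}I_m}\psi(\ell)^{-D-\frac{q}{\alpha}I_m}$; the scalar factor $\tau_{E^T}(u)^{-q/\alpha}$ combines with the Jacobian $\tau_{E^T}(u)^{q-1}$ to leave $\tau_{E^T}(u)^{-1}$, which is exactly what makes the radial integral of the right "size." Then use the scaling property of $\tau_{E^T}$ together with $|e^{i\langle t,u\rangle}-1|^\alpha$: by the self-similarity argument (substituting $u \mapsto \tau_{E^T}(t)^{-E^T}u$ or equivalently rescaling), one reduces to showing the lower bound holds with $\tau_E(t)=1$ and then restores the general case via Corollary \ref{auxcor}, which controls $\|\tau_{E^T}(t)^{-D}\psi(\ell)^{-D-\frac{q}{\alpha}I_m}\theta\|$ from below by $\sum_j \tau_{E^T}(t)^{\lambda_j+\varepsilon}\,|(\psi(\ell)^{-D-\frac{q}{\alpha}I_m}\theta)_j|$ (noting $\tau_E = \tau_{E^T}$ up to constants, or handled by a change of the fixed norm). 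The $\varepsilon$ loss enters precisely here, from Corollary \ref{auxcor}.

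The main obstacle, and the step requiring the most care, is the lower bound on the remaining integral over the $E^T$-unit sphere and the radial variable of $|e^{i\langle t,u\rangle}-1|^\alpha$ times $\sum_j |(\psi(\ell)^{-D-\frac{q}{\alpha}I_m}\theta)_j|^\alpha$ (divided by the radial variable). One must check that this integral is bounded below by a constant times $\sum_j|\theta_j|^\alpha$ uniformly — i.e. that the oscillatory factor $|e^{i\langle t,u\rangle}-1|^\alpha$ does not kill the integral and that no component direction is degenerate. For the oscillatory factor, one restricts the radial integration to a region where $|\langle t,u\rangle|$ is of order one (possible after the rescaling that normalizes $\tau_E(t)$), on which $|e^{i\langle t,u\rangle}-1|^\alpha \geq c > 0$; one also needs the integral to converge at infinity, which holds because $\beta = \lambda_j-\varepsilon$ can be taken in $(0,a_1)$ using \eqref{eigenvalues}, matching the integrability hypothesis of Lemma \ref{helplemma}. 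For the non-degeneracy in each coordinate $j$, I would use that the map $\ell \mapsto \psi(\ell)^{-D-\frac{q}{\alpha}I_m}$ is continuous and invertible (since $\psi(\ell)\neq 0$ and the matrix exponential is invertible), so the components $(\psi(\ell)^{-D-\frac{q}{\alpha}I_m}\theta)_j$ cannot all vanish simultaneously on a set of full measure unless $\theta = 0$; a compactness argument on the unit sphere then yields the uniform constant $C_{5,1}$. These estimates are analogous to those in \cite[Section 4]{BMS} and \cite{Soenmez} for the Gaussian case and one-dimensional operator scaling fields, so I would cite those for the routine parts and spell out only the adaptation to the matrix-valued, $\alpha<2$ setting.
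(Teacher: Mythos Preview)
Your approach via the characteristic function formula \eqref{characfct}, the change of variables that extracts the factor $\tau_E(t)^{D}$, and Corollary \ref{auxcor} is exactly how the paper begins. The paper writes $t=\tau_E(t)^{E}l_E(t)$, uses the $E^T$-homogeneity of $\psi$ and a substitution in $y$ to obtain
\[
\mathbb{E}\big[e^{i\langle X(t),\theta\rangle}\big]
=\exp\Big(-\int_{\rd}\big|e^{i\langle l_E(t),y\rangle}-1\big|^{\alpha}\,
\big\|\tau_E(t)^{D}\psi(y)^{-D-\frac{q}{\alpha}I_m}\theta\big\|^{\alpha}\,dy\Big),
\]
and then bounds the norm from below via Corollary \ref{auxcor}, just as you propose. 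The divergence is in the last step. You try to reach the full sum $\sum_{j}(\tau_E(t)^{\lambda_j+\varepsilon}|\theta_j|)^{\alpha}$ directly by a compactness/non-degeneracy argument over the $E^T$-sphere. The paper instead first establishes only the single-coordinate bound
\[
\mathbb{E}\big[e^{i\langle X(t),\theta\rangle}\big]\le
\exp\big(-c\,|\tau_E(t)^{\lambda_j+\varepsilon}\theta_j|^{\alpha}\big)
\qquad\text{for each fixed }j,
\]
with $c=c\,m_\alpha$ coming from $m_\alpha=\min_{\xi\in S_E}\int_{\rd}|e^{i\langle\xi,y\rangle}-1|^{\alpha}\psi(y)^{-(\lambda_m-\varepsilon)\alpha-q}\,dy>0$, and then upgrades this to the sum by the $S\alpha S$ stability trick: since $X(t)\stackrel{\rm d}{=}m^{-1/\alpha}\sum_{k=1}^{m}X^{(k)}$ for i.i.d.\ copies $X^{(k)}$, the characteristic function factorizes into $m$ identical factors, and one applies the single-$j$ bound with a different $j$ to each factor.

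Your direct route can be completed, but the compactness step needs more than you wrote. Corollary \ref{auxcor} produces the components $\big|(\psi(\cdot)^{-D-\frac{q}{\alpha}I_m}\theta)_j\big|$, not $|\theta_j|$, and $\psi(\cdot)^{-D}$ mixes coordinates inside each Jordan block, so ``invertibility plus compactness'' does not immediately give a lower bound by $|\theta_j|^{\alpha}$ with the \emph{correct} exponent $\lambda_j$. The observation that rescues the argument is that within a single Jordan block all the $\lambda_j$ coincide, so the common factor $\tau_E(t)^{\lambda+\varepsilon}$ can be pulled out before invoking invertibility of $\psi(\ell)^{-D}$ on that block and compactness of $S_{E^T}$. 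The paper's stability trick bypasses this issue entirely; it is shorter and exploits the $S\alpha S$ structure in a way your argument does not.
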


Let us recall that $X(t)$ is defined in \eqref{harmon} with characteristic function given by \eqref{characfct}.
\begin{proof}
Let $\big( \tau_E (t), l_E (t) \big)$ be the polar coordinates of $t$ with respect to $E$ according to \cite[Section 2]{BMS} and $S_E = \{ t \in \rd : \tau_E(t) = 1 \}$. Further, let $c$ be an unspecified positive constant which might change in every occurence. Using the characteristic function of the $S\alpha S$ random vector $X(t)$  and the $E^T$-homogenity of $\psi$, by Corollary \ref{auxcor}, the change to polar coordinates and a substitution we get
\begin{equation} \label{cfineq}
\begin{split}
& \mathbb{E} \big[ \exp \big( i \langle X(t), \theta \rangle \big) \big] \\
& = \exp \Big( - \int_{\rd} | \exp \big( i \langle l_E(t) , y \rangle \big) - 1 |^\alpha \| \tau_E (t)^D \psi (y) ^{-D - \frac{q}{\alpha} I_m} \theta \|^\alpha dy \Big) \\
& \leq  \exp \Big( - c \int_{\rd} | \exp \big( i \langle l_E(t) , y \rangle \big) - 1 |^\alpha \psi (y) ^{-(\lambda_m - \varepsilon) \alpha - q} dy \Big|  \sum_{j=1}^m \tau_E (t) ^{\lambda_j + \varepsilon}  | \theta_j | \Big| ^\alpha \Big) \\
& \leq  \exp \Big( - c m_\alpha \cdot  \big| \tau_E (t) ^{\lambda_j + \varepsilon}  | \theta_j | \big| ^\alpha \Big)
\end{split}
\end{equation}
for all $1 \leq j \leq m,$ with $m_\alpha = \min_{\xi \in S_E} \int_{\rd} | e^{  i \langle\xi , y \rangle  - 1 }|^\alpha \psi (y) ^{-(\lambda_m - \varepsilon) \alpha - q} dy$ a positive and finite constant. Now let $X^{(1)}, \ldots , X^{(m)}$ be independent copies of $X(t)$. Then, since $X(t)$ is an $S\alpha S$ random vector, by \cite[Corollary 2.1.3]{SamorodTaqq}, we have
$$ m^{-\frac{1}{\alpha}} ( X^{(1)} + \ldots + X^{(m)} ) \stackrel{\rm d}{=} X(t) .$$
Using this and \eqref{cfineq}, we get
\begin{align*}
\mathbb{E} \big[ \exp \big( i \langle X(t), \theta \rangle \big) \big] & = \mathbb{E} \big[ \exp \big( i \langle  m^{-\frac{1}{\alpha}} \sum^m_{j=1}  X^{(j)} , \theta \rangle \big) \big]  \\
& = \prod^m_{j=1} \mathbb{E} \big[ \exp \big( i m^{-\frac{1}{\alpha}} \langle  X^{(j)} , \theta \rangle \big) \big]  \\
& \leq  \prod^m_{j=1}  \exp \Big( - c  \big| \tau_E (t) ^{\lambda_j + \varepsilon}  | \theta_j | \big| ^\alpha \Big) \\
& = \exp \Big( - c \sum_{j=1}^m \big| \tau_E (t) ^{\lambda_j + \varepsilon}  | \theta_j | \big| ^\alpha \Big)
\end{align*}
as desired.
\end{proof}

Let us now give a proof of Theorem \ref{hausdorff}.
\newline

\noindent \textit{Proof of Theorem \ref{hausdorff}.}
Let us choose a continuous version of $X$. By using the same argument as in the proof of \cite[Theorem 4.1]{Soenmez}, without loss of generality we will assume that $D$ is of the real canonical form. Then, the upper bounds in Theorem \ref{hausdorff} follow from \eqref{continuity} and Lemma 4.4 in \cite{Soenmez}. So it remains to prove the lower bounds in Theorem \ref{hausdorff}. We will do this by applying Frostman's criterion  (see e.g. \cite{Adler, Fal, Kahane, Mattila}). Throughout this proof, let $c$ be an unspecified positive constant which might change in each occurence. Let us first prove the lower bound in (5.1). One only has to prove that the expected energy integral
\begin{align*}
\mathcal{E}_\gamma = \int_{[0,1]^d \times [0,1]^d} \mathbb{E} [ \|X(x) - X(y) \|^{-\gamma} ] dx dy
\end{align*}
is finite to get that $\dim_{\mathcal{H}} X ([0,1]^d) \geq \gamma$ almost surely. Recall that (see \cite[p. 283]{Xiao3}) for any random vector $Y$ with values in $\rmm$
\begin{align*}
2^{\frac{\gamma}{2}-1} \Gamma (\frac{\gamma}{2}) (2 \pi)^{-\frac{m}{2}} \mathbb{E} \big[ \| Y \|^{-\gamma} \big] = \int_0^\infty \int_{\rmm} \exp \big( - \frac{\|y\|^2}{2} \big) \mathbb{E} \big[ \exp \big( i \langle uy, Y \rangle \big) \big] dy u^{\gamma-1} du.
\end{align*}
Using this and the fact that $X$ has stationary increments, by Lemma \ref{cfbound} for all $\lambda'_j > \lambda_j , 1 \leq j \leq m$, we get that
\begin{align*}
\eta_\gamma & = \mathbb{E} [ \|X(t) - X(s) \|^{-\gamma} ] \leq c \int_\rmm \| x \| ^{\gamma - m} \exp \Big( - c \sum_{j=1}^m \big| \tau_E (t-s) ^{\lambda'_j}  | x_j | \big| ^\alpha \Big) dx \\
& = c \tau_E (t-s) ^{- \sum_{j=1}^m \lambda'_j}  \int_\rmm \Big[ \sum_{j=1}^m \Big( \tau_E (t-s) ^{- \lambda'_j} y_j \Big) ^2  \Big]^ {\frac{\gamma - m}{2}} \exp \Big( - c \sum_{j=1}^m |y_j |^\alpha \Big) dy_1 \ldots dy_m \\
& = c \tau_E (t-s) ^{- \sum_{j=1}^m \lambda'_j + (m - \gamma ) \lambda'_m}  \\
& \quad \times \int_\rmm \Big[ y_m^2 + \sum_{j=1}^{m-1} \Big( \tau_E (t-s) ^{\lambda_m - \lambda'_j} y_j \Big) ^2  \Big]^ {\frac{\gamma - m}{2}} \exp \Big( - c \sum_{j=1}^m |y_j |^\alpha \Big) dy_m \ldots dy_1.
\end{align*}
From the proof of \cite[Theorem 3.1]{Xiao3} we immediately get that
$$\eta_\gamma \leq c \tau_E (t-s) ^{- \sum_{j=1}^{m-k} (\lambda'_{m-k} -  \lambda'_j) - \gamma \lambda'_{m-k}} $$
for all $0 \leq k \leq m-1$ as soon as $m-k-1 < \gamma < m - k$ or, equivalently,
$$\eta_\gamma \leq c \tau_E (t-s) ^{- \sum_{j=1}^{k} (\lambda'_{k} -  \lambda'_j) - \gamma \lambda'_{k}} $$
for all $1 \leq k \leq m$ as soon as $k-1 < \gamma <  k$. Since, for all $1 \leq k \leq m$, the integral
$$\int_{[0,1]^d \times [0,1]^d} \tau_E (t-s) ^{- \sum_{j=1}^{k} (\lambda'_{k} -  \lambda'_j) - \gamma \lambda'_{k}}  dt ds$$
is shown to be finite in \cite{Soenmez} for all
$$ 0 < \gamma < \min \Big\{ m, \frac{ \sum_{j=1}^p a_j \mu_j + \sum_{i=1}^k (\lambda'_k - \lambda'_i) }{ \lambda'_k } \Big\} ,$$
this proves (5.1) by letting $\lambda'_i \to \lambda_i, 1 \leq i \leq m$.

It remains to prove the lower bound in (5.3).  Again, by Frostman's criterion, it suffices to show that
$$\mathcal{G} _\gamma = \int_{[0,1]^d \times [0,1]^d} \mathbb{E} \big[ \big( \|x-y\|^2 + \|X(x) - X(y) \|^2 \big) ^{-\frac{\gamma}{2}} \big] dx dy < \infty$$
in order to obtain $\dim_{\mathcal{H}} \Gr X ([0,1]^d) \geq \gamma$ almost surely. We do this by generalizing the Fourier inversion method used in \cite{BMS, BL}, see also \cite{BCI}. First note that, since $\dim_{\mathcal{H}} \Gr X ([0,1]^d) \geq \dim_{\mathcal{H}} X ([0,1]^d)$ always holds, the case $\dim_{\mathcal{H}} X ([0,1]^d) < m$ and the corresponding upper bound in (5.3) imply that $\dim_{\mathcal{H}} \Gr X ([0,1]^d) = \dim_{\mathcal{H}} X ([0,1]^d)$ almost surely. The dimension of the graph can be larger as the dimension of the range if $\dim_{\mathcal{H}} X ([0,1]^d) = m$, so, in the following, let us assume that $\gamma >m$.  Furthermore, for all $\xi \in \rmm$, let us define a function $f_\gamma (\xi ) = \big( \| \xi \| ^2 + 1 \big) ^{-\frac{\gamma}{2}}$ and denote its Fourier transform by $\hat{f}_\gamma$. Since $\gamma > m$, we have that $\hat{f}_\gamma \in L^\infty (\mathbb{R} ) \cap  L^1 (\mathbb{R} )$, i.e. $\hat{f}_\gamma$ belongs to the set of essentially bounded functions. Using this and Fourier inversion, one easily gets that
$$\zeta_\gamma =  \mathbb{E} \big[ \big( \|s-t\|^2 + \|X(s) - X(t) \|^2 \big) ^{-\frac{\gamma}{2}} \big] \leq c \|s-t\|^{-\gamma} \int_\rmm \mathbb{E} \Big[ e^{i \langle y, \frac{ X(t-s)}{\|t-s\|} \rangle} \Big] dy.$$
From Lemma \ref{cfbound}, we further obtain for all $ \lambda'_j >  \lambda_j, 1 \leq j \leq m$,
\begin{align*}
\zeta_\gamma & \leq c \|s-t\|^{-\gamma} \int_\rmm \exp \Big( -c   \sum_{j=1}^m \big| \tau_E (t-s) ^{\lambda'_j}   \frac{|y_j|}{\|s-t\|}  \big| ^\alpha \Big) dy \\
& = c \|s-t\|^{m-\gamma} \int_\rmm \exp \Big( -c   \sum_{j=1}^m \big| \tau_E (t-s) ^{\lambda'_j}   |x_j|  \big| ^\alpha \Big) dx \\
& = c \|s-t\|^{m-\gamma} \tau_E (t-s) ^{- \sum_{j=1}^m \lambda'_j} .
\end{align*}
The rest of the proof follows from the proof of \cite[Theorem 4.1]{Soenmez}. Finally, (5.2) and (5.4) are easily verified, see \cite[Lemma 4.2]{Soenmez}.
\hfill $\Box$

\bibliographystyle{plain}

\end{document}